\documentclass[11pt]{article}
\setlength{\textheight}{23cm} \setlength{\textwidth}{17cm}
\usepackage[margin=2.4cm]{geometry}

\usepackage{graphicx}
\graphicspath{{./images}}
\usepackage{amsmath,amsthm,latexsym,amsfonts,amssymb,mathrsfs}
\usepackage[usenames]{color}
\usepackage{tikz}
\usetikzlibrary{math}

\usepackage{cancel,soul,ulem}

\usepackage{url}

\newcommand{\calB}{\mathcal{B}}
\newcommand{\calBhat}{\hat{\mathcal{B}}}
\newcommand{\calL}{\mathcal{L}}

\newcommand{\vf}{{\bf f}}
\newcommand{\vn}{{\bf n}}
\newcommand{\vu}{{\bf u}}
\newcommand{\vv}{{\bf v}}

\newcommand{\vV}{{\bf V}}
\newcommand{\vH}{{\bf H}}
\newcommand{\vVhat}{\mathbf{\hat V}}
\newcommand{\vHhat}{\mathbf{\hat H}}
\newcommand{\vw}{{\bf w}}

\newcommand{\bsx}{{\boldsymbol{x}}}

\newcommand{\vsigma}{\boldsymbol{\sigma}}

\newcommand{\R}{\mathbb{R}}

\numberwithin{equation}{section}

\newcommand{\veps}{{\boldsymbol{\varepsilon}}}

\newcommand{\ud}{\mathrm{d}}
\newcommand{\diam}{\mathrm{diam}}
\newtheorem{theorem}{Theorem}[section]
\newtheorem{lemma}[theorem]{Lemma}
\newtheorem{corollary}[theorem]{Corollary}

\newtheorem{assumption}[theorem]{Assumption}
\newtheorem{remark}[theorem]{Remark}

\title{A simple modification to mitigate locking in conforming FEM\\ for nearly incompressible elasticity
 \thanks{School of Mathematics and Statistics, University of New South Wales, Sydney, Australia}\thanks{This work was supported by the Australian Research Council
grant DP220101811.}}
\author{K. Mustapha, W. McLean, J. Dick and  Q. T. Le Gia}
\begin{document}
\maketitle
\begin{abstract}
Due to the divergence-instability, the accuracy of low-order conforming finite element methods (FEMs)  for nearly incompressible elasticity equations deteriorates as the Lam\'e parameter $\lambda\to\infty$, or equivalently as the Poisson ratio $\nu\to1/2$.  This effect is known as {\itshape locking} or {\itshape non-robustness}. For the piecewise linear case, the error in the ${\bf L}^2$-norm of the standard Galerkin conforming FEM is bounded by~$C\lambda h^2$, resulting in poor accuracy for practical values of~$h$ if $\lambda$ is sufficiently large. In this short paper, we show that the locking phenomenon can be reduced by replacing $\lambda$ with~$\lambda_h=\lambda\mu/(\mu+\lambda h/L)<\lambda$ in the stiffness matrix, where $\mu$ is the second Lam\'e parameter and $L$ is the diameter of the body $\Omega$. We prove that with this modification, the error in the ${\bf L}^2$-norm is bounded by $Ch$ for a constant $C$ that does not depend on $\lambda$.  Numerical experiments confirm this convergence behaviour and show that, for practical meshes, our  method is more accurate than the standard method if $\lambda$ is larger than about $\mu L/h$.  Our analysis also shows that the error in the ${\bf H}^1$-norm is  bounded by $C\lambda_h^{1/2}\,h$, which improves the  $C\lambda^{1/2}\,h$ estimate for the case of conforming FEM.
\end{abstract}
\section{Introduction}
Let $\Omega\subseteq\R^d$ denote a polygon if $d=2$, or a polyhedron if 
$d=3$, and consider the following linear elasticity equation:  
\begin{equation}
 -\nabla \cdot \vsigma(\vu(\bsx)) = \vf(\bsx) \quad \text{for $\bsx \in \Omega$,}
\label{eq:L1}
\end{equation}
subject to a homogeneous Dirichlet condition $\vu = {\bf 0}$ on the
boundary $\Gamma:=\partial \Omega$. The Cauchy stress
tensor $\vsigma \in [L^2(\Omega)]^{d\times d}$ is defined as 
\[\vsigma(\vu) = \lambda \nabla\cdot \vu\, {\bf I} 
+ 2\mu \veps(\vu)\quad\text{on $\Omega$,} \] 
with $\vu$ being the displacement vector field and the symmetric strain 
tensor $\veps(\vu) := \frac{1}{2} (\nabla \vu + (\nabla \vu)^T)$. Here, $\vf$ is
the body force per unit volume, and ${\bf I}$ is the identity tensor. The
gradient ($\nabla$) and the divergence ($\nabla \cdot$) are understood to be
with respect to the physical variable $\bsx \in \Omega$.  The constant Lam\'e
parameters are assumed to satisfy $(\lambda,\mu)\in[\lambda_0,\infty)\times[\mu_0,\mu_1]$ 
with $\lambda_0>0$ and $0<\mu_0<\mu_1<\infty$. We are interested
in the case when the Poisson ratio $\nu$ of the elastic material
approaches $1/2$ or, equivalently, $\lambda$ approaches infinity, and thus the material is
nearly incompressible.  It will frequently 
prove convenient to work with the dimensionless ratio~$\alpha=\lambda/\mu\gg1$.

The convergence rate may deteriorate (as it depends on $\lambda$) when conforming
FEMs are employed to approximate the solution of~\eqref{eq:L1}. This issue is
commonly referred to as locking. It has been shown that locking is absent for
polynomials of degree $\ge 4$ on a triangular mesh~\cite{ScottVogelius1985}.
However, on quadrilateral meshes, locking cannot be  avoided for any polynomial
of degree $\ge1 $ \cite{BabuskaSuri1992}.  In a related result,
Vogelius~\cite{Vogelius1983} proved that the $p$-version of the conforming FEM
on smooth domains is  unaffected by locking. The survey paper of Ainsworth and
Parker~\cite{AinsworthParker2022} illustrates and analyses some of the
surprising effects that can occur when $\lambda$ is very large.

The locking phenomenon in conforming FEMs has motivated researchers to explore
and develop various  locking-free numerical methods ($h$-, $p$-, and
$hp$-versions, low-order, and high-order) for solving the nearly incompressible elasticity equation \eqref{eq:L1}. These methods include  nonconforming FEMs
\cite{ArnoldAwanouWinther2014,BrennerSung1992,ChenRenMao2010,Falk1991,Lamichhane2015,LeeLeeSheen2003,MaoChen2008,YangChen2010}, conforming-nonconforming mixed FEMs \cite{BoffiBrezziFortin2013,
GopalakrishnanGuzman2011,HuShi2008,ZhangZhaoChenYang2018}, the weak Galerkin FEM \cite{ChenXie2016,
HuoWangWangZhang2020,HuoWangWangZhang2023,LiuWang2022,WangWangWangZhang2016},
the discontinuous Petrov--Galerkin and Galerkin methods  \cite{BramwellDemkowiczGopalakrishnanQiu2012, CockburnSchotzauWang2006,DiPietroNicaise2013,
HansboLarson2002,Wihler2006}, and the virtual element method~\cite{EdoardoStefanoCarloLuca2020,HuangLinYu2022,VeigaBrezziMarini2013,
ZhangZhaoYangChen2019,ZhaoWangZhang2022}.  It is worth mentioning that most of these methods are more complicated, both computationally and in their analysis,
than the conforming FEM. Furthermore, these methods become more complicated  for the case of an inhomogeneous material, where the Lam\'e parameters are functions of $\bsx$.

{Another approach that might be used to avoid locking in the
conforming linear FEM is the so-called reduced integration technique
\cite{HughesCohenHaroun1978,MalkusHughes1978}, in which a finite element
interpolation operator $\Pi$ has the property $\nabla\cdot(\Pi \vv)=0$ in
a weak sense whenever $\nabla\cdot\vv=0$.  The same procedure was used
for the case of a pure traction problem~\cite{BrennerSung1992}. 

The aim of the present work is to demonstrate that the locking phenomenon in
the popular piecewise linear conforming FEM can be mitigated simply by replacing
the Lam\'e parameter $\lambda$ with $\lambda_h=\lambda\mu/(\mu+\lambda h/L)$ in the
stiffness matrix, where $L$ is the diameter of~$\Omega$. Although 
our analysis is independent of the dimension~$d$, we rely on some delicate technical 
estimates referred to below as Assumption~\ref{assumption}.   These estimates are known to 
hold if $d=2$ and if the polygon~$\Omega$ is convex.

In the next section, we succinctly discuss the weak formulation of
Problem~\eqref{eq:L1} and the well-posedness of the modified conforming FEM, as
well as setting up a formalism to keep track of how terms in our estimates
scale with~$L$. In Section~\ref{ConvAna}, we demonstrate that the
scheme is $O(h) $ accurate, uniformly in $\lambda$. Additionally, we include a remark on the applicability of these results in the case of mixed Dirichlet-traction boundary conditions. Furthermore, we include another remark on how to extend the proposed finite element scheme and the convergence analysis for the case of an inhomogeneous body, where  the Lam\'e parameters $\lambda$ and $\mu$ depend on the space variable $\bsx \in \Omega$.  
To support our theoretical findings, we present some numerical results in Section~\ref{Sec: Numeric} for two different examples,  including Cook's benchmark problem.
\section{Modified conforming FEM}\label{Sec: FEM}
In this section, we discuss the existence and uniqueness of the
weak solution of \eqref{eq:L1},   define our piecewise linear, modified conforming 
Galerkin FEM, and then introduce some scale-invariant norms before discussing the required 
regularity properties for the convergence analysis. We investigate the error estimates in 
the next section. Throughout the paper, $C$~and $c$ denote generic 
(large and small) positive constants, independent of $h$, $L$, $\lambda$ and hence also 
$\alpha=\lambda/\mu$.

In the weak formulation of  equation~\eqref{eq:L1}, we seek $\vu \in \vV$
satisfying
\[
\int_\Omega[2\mu\,\veps(\vu):\veps(\vv)+\lambda(\nabla\cdot\vu)(\nabla\cdot\vv)]\,\ud\bsx
= \int_\Omega \vf \cdot \vv \,\ud\bsx 
\quad \text{for all~ $\vv \in \vV$,}
\]
where the colon operator is the inner product between tensors.  
We define the linear functional $\ell$ and the bilinear form $\calB$ by
\begin{equation*}\label{eq: bilinear}
\ell(\vv) = \int_\Omega\mu^{-1}\vf \cdot \vv \,\ud\bsx 
 \quad\text{and}\quad
\calB(\vu,\vv)=\int_\Omega[2\veps(\vu):\veps(\vv)
    +\alpha(\nabla\cdot\vu)(\nabla\cdot\vv)]\,\ud\bsx,
\end{equation*}
so that, after dividing by~$\mu$, the weak formulation can be expressed as
\begin{equation}\label{para weak}
 \calB(\vu, \vv) = \ell(\vv), \quad \text{for all~ $\vv \in \vV$.}
\end{equation}
Let $\|\cdot\|$ denote the norm in ${\bf L}^2(\Omega)=[L^2(\Omega)]^d$, and let
$\|\cdot\|_{\vV} $ and $\|\cdot\|_{\vH}$ denote norms in
$\vV=[H^1_0(\Omega)]^d $ and ${\vH}=[H^2(\Omega)]^d$, respectively.  (Below, we
will make specific choices of the latter two norms.) Here, $H^1_0(\Omega) $ and
$H^2(\Omega)$ are the usual Sobolev spaces of scalar-valued functions on $\Omega$.  Finally, $\vV^*$ is 
the dual space of $\vV$ with respect to the inner product in ${\bf L}^2(\Omega)$. 
 
Since $\calB(\vv,\vw) 
\le C\alpha\| \vv\|_{\vV}\,\|\vw\|_{\vV}$ for $\vv,\vw \in \vV$, the bilinear 
form $\calB(\cdot,\cdot)$ is bounded over $\vV \times \vV$. The coercivity of $\calB(\cdot,\cdot)$ 
on $\vV$ follows from Korn's inequality,
\begin{equation}\label{eq: coer B}
\|\vv\|_\calB^2:=\calB(\vv,\vv)\ge2\|\veps(\vv)\|^2\ge c\|\vv\|_{\vV}^2,
\quad\text{for all $\vv\in\vV$.}
\end{equation}
Owing to these two properties, and since $|\ell(\vv)|
\le \|\mu^{-1}\vf\|_{\vV^*}\|\vv\|_{\vV}$, an application of
the Lax--Milgram theorem completes the proof of the next theorem. 

\begin{theorem}\label{thm: unique solution}
For every $f\in \vV^*$, the problem~\eqref{para weak} has a unique
solution $\vu\in\vV$.
\end{theorem}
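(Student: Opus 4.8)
The plan is to invoke the Lax--Milgram theorem on the Hilbert space $\vV=[H^1_0(\Omega)]^2$, so the whole argument reduces to checking its three standing hypotheses: that $\calB$ is a bounded bilinear form on $\vV\times\vV$, that $\calB$ is coercive on $\vV$, and that $\ell$ is a bounded linear functional on $\vV$. Two of these have in effect already been recorded in the excerpt, so the proof is mostly a matter of assembling the pieces in the right order.

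First I would observe that $\vV$, equipped with the inner product inducing $\|\cdot\|_\vV$, is genuinely a Hilbert space, being a closed subspace of $[H^1(\Omega)]^2$. Bilinearity of $\calB$ and linearity of $\ell$ are immediate from their integral definitions. Boundedness of $\calB$ follows from the Cauchy--Schwarz inequality applied term by term, giving $\calB(\vv,\vw)\le C_{\lambda,\mu}\|\vv\|_\vV\|\vw\|_\vV$ as stated. The one substantive point is coercivity: estimate~\eqref{eq: coer B} already reduces this to $\calB(\vv,\vv)\ge 2\mu\|\veps(\vv)\|^2$, after which one invokes Korn's (first) inequality $\|\veps(\vv)\|\ge c\|\vv\|_\vV$ valid on $[H^1_0(\Omega)]^2$ to obtain $\calB(\vv,\vv)\ge c\mu\|\vv\|_\vV^2$. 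Continuity of $\ell$ is then clear from $|\ell(\vv)|\le\|\vf\|_{\vV^*}\|\vv\|_\vV$.

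The main obstacle, insofar as there is one, is Korn's inequality, the only ingredient that is not a one-line estimate; it rests on the homogeneous Dirichlet condition built into $\vV$ together with $\Omega$ being a bounded polygonal (hence Lipschitz) domain. It is worth noting that the coercivity constant $c\mu$ stays bounded away from zero uniformly in $\lambda$, since we simply discard the nonnegative $\lambda(\nabla\cdot\vv)^2$ contribution, whereas the boundedness constant $C_{\lambda,\mu}$ does grow with $\lambda$. This asymmetry is harmless for the mere existence and uniqueness asserted here, but it is precisely the mismatch that foreshadows the locking difficulty treated later in the paper. With all hypotheses verified, the Lax--Milgram theorem delivers a unique $\vu\in\vV$ satisfying~\eqref{para weak}, completing the proof.
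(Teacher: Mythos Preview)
Your proposal is correct and follows essentially the same approach as the paper: verify boundedness of~$\calB$, coercivity via Korn's inequality (exactly as recorded in~\eqref{eq: coer B}), and boundedness of~$\ell$, then invoke the Lax--Milgram theorem. The paper in fact compresses these verifications into the paragraph immediately preceding the theorem, so your write-up is simply a more expanded version of the same argument.
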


For the finite element scheme, we introduce as usual a family of regular 
quasi-uniform triangulations $\mathcal{T}_h$ of the domain $\Omega$ and let 
$h=\max_{K\in \mathcal{T}_h}(h_K)$, where $h_{K}$ denotes the diameter of the element $K$.  The space of polynomials with degree at most $1$ is denoted
by $P_1$, and the conforming finite element space is then defined by
\[\vV_h=\{\,\vw_h \in {\bf V}:\text{$\vw_h |_K \in [P_1]^2$ for all $K \in \mathcal{T}_h$}\,\}.
\]
In the standard conforming FEM we seek $\vu_h \in  \vV_h$ such that
$\calB(\vu_h, \vv_h) = \ell(\vv_h)$ for all $\vv_h \in  \vV_h$. To
control locking when $\lambda$ is large, more precisely,
when $\lambda$ is larger than about $L/h$, we modify this scheme as
follows: find $\vu_h \in  \vV_h$ such that
\begin{equation}\label{FEM new}
\calB_h(\vu_h,\vv_h)=\ell(\vv_h),\quad\text{for all $\vv_h\in\vV_h$}.
\end{equation}
The bilinear operator $\calB_h$ is defined in the same way as~$\calB$
in~\eqref{eq: bilinear}, except with~$\alpha_h$ in place of~$\alpha$,
that is,
\begin{equation*}
\calB_h(\vu,\vv)=\int_\Omega[2\veps(\vu):\veps(\vv)
  +\alpha_h(\nabla\cdot\vu)(\nabla\cdot\vv)]\,\ud\bsx,
\end{equation*} 
for a dimensionless parameter~$\alpha_h$ given by~\eqref{eq: alpha_h} below.   Equivalently, we replace 
$\lambda$ with~$\lambda_h=\alpha_h\mu$ in the standard finite element formulation. The existence and
uniqueness of the solution~$\vu_h$ of~\eqref{FEM new} is guaranteed because the
operator~$\calB_h$ is bounded and positive-definite on~$\vV_h$.  For future
reference, we note that
\begin{equation}\label{eq: B-B_h}
\calB(\vw,\vv)-\calB_h(\vw,\vv)
  =\int_\Omega (\alpha-\alpha_h)(\nabla\cdot\vw)(\nabla\cdot\vv)\,\ud\bsx
  \quad\text{for all $\vw$, $\vv\in\vV$.}
\end{equation}
The choice of $\alpha_h$ will follow by balancing two terms that scale
differently with the size of the domain~$\Omega$. We therefore introduce a reference domain with 
unit diameter,
\[
\widehat\Omega=L^{-1}\Omega=\{\,L^{-1}\bsx:\bsx\in\Omega\,\}
\quad\text{where $L=\diam(\Omega)$.}
\]
To each function $\vv:\Omega\to\mathbb{R}^d$ we associate a
function $\hat\vv:\widehat\Omega\to\mathbb{R}^d$ defined by
\[
\hat\vv(\hat\bsx)=\vv(\bsx)
  \quad\text{for $\bsx=L\hat\bsx\in\Omega$ and $\hat\bsx\in\widehat\Omega$.}
\]
For $r\ge 1,$ denote the Sobolev seminorm by $|\vv|_{r,\Omega}=\bigl(\sum_{|\beta|=r}
\|\partial^\beta\vv\|^2\bigr)^{1/2}$, and observe that
\[
\|\vv\|=L^{d/2}\|\hat\vv\|_{\mathbf{L}^2(\widehat\Omega)}
\quad\text{and}\quad
|\vv|_{r,\Omega}=L^{d/2-r}|\hat\vv|_{r,\widehat\Omega}
\]
because $\ud\bsx=L^d\ud\hat\bsx$ and
$\partial^\beta\vv(\bsx)=L^{-|\beta|}\partial^\beta\hat\vv(\hat\bsx)$.
Equip $\vV$ and $\vH$ with the specific norms given by
\begin{equation}\label{eq: V H norms}
\|\vv\|_{\vV}^2=L^{-d}\|\vv\|^2+L^{2-d}|\vv|_{1,\Omega}^2
\quad\text{and}\quad
\|\vv\|_{\vH}^2=\|\vv\|_{\vV}^2+L^{4-d}|\vv|_{2,\Omega}^2,
\end{equation}
and the corresponding spaces
$\vVhat=[H^1_0(\widehat\Omega)]^2$ and $\vHhat=[H^2(\widehat\Omega)]^2$ with norms given by
\[\|\hat\vv\|_{\vVhat}^2=\|\hat\vv\|_{\mathbf{L}^2(\widehat\Omega)}^2
  +|\hat\vv|_{1,\widehat\Omega}^2
\quad\text{and}\quad
\|\hat\vv\|_{\vHhat}^2=\|\hat\vv\|_{\vVhat}^2
  +|\hat\vv|_{2,\widehat\Omega}^2.
\]
In this way, $\|\vv\|_{\vV}=\|\hat\vv\|_{\vVhat}$ and 
$\|\vv\|_{\mathbf{H}}=\|\hat\vv\|_{\vHhat}$.

Since $\veps(\vv)=L^{-1}\veps(\hat\vv)$ and
$\nabla\cdot\vv=L^{-1}\nabla\cdot\hat\vv$, if $\vu\in\vV$ satisfies
\eqref{para weak}, then $\hat\vu\in\vVhat$ will
satisfy
\begin{equation}\label{eq: u hat solution}
\calBhat(\hat\vu,\hat\vv)=\hat\ell(\hat\vv)
\quad\text{for all $\hat\vv\in\vVhat$,}
\end{equation}
where
\begin{equation}\label{eq: ell B hat}
\hat\ell(\hat\vv) = \int_{\widehat\Omega}\mu^{-1}L^2\hat\vf\cdot\hat\vv\,\ud\hat\bsx
\quad\text{and}\quad
\calBhat(\hat\vu,\hat\vv)=\int_{\widehat\Omega}
[2\veps(\hat\vu):\veps(\hat\vv)+\alpha(\nabla\cdot\hat\vu)
  (\nabla\cdot\hat\vv)]\,\ud\hat\bsx.
\end{equation}
Noting that $\hat\ell(\hat\vv)=L^{2-d}\ell(\vv)$ and
$\|\hat\vv\|_{\vVhat}=\|\vv\|_{\vV}$, with $\|\vv\|\le L^{d/2}\|\vv\|_{\vV}$,
we see that
\begin{equation}\label{eq: norm ell}
\|\hat\ell\|_{\mathbf{L}^2(\widehat\Omega)}=L^{2-d/2}\|\ell\|
=\mu^{-1}L^{2-d/2}\|f\|~\text{and}~
\|\hat\ell\|_{\vVhat^*}=L^{2-d}\|\ell\|_{\vV^*}\le L^{2-d/2}\|\ell\|
= \mu^{-1}L^{2-d/2}\|\vf\|.
\end{equation}

The map $\bsx\mapsto\hat\bsx$ takes the triangulation of $\Omega$ to a
triangulation of $\widehat\Omega$ with mesh size $\hat h=h/L$, and we have
the corresponding finite element space
$\vVhat_h=\{\,\hat\vv_h:\vv_h\in\vV_h\,\}$, with $\hat\vu_h\in\vVhat_h$
satisfying
\[
\calBhat_h(\hat\vu_h,\hat\vv_h)=\hat\ell(\hat\vv_h)
\quad\text{for $\hat\vv_h\in\vVhat_h$,}
\]
where $\calBhat_h$ is defined in the same way as~$\calBhat$
in~\eqref{eq: ell B hat} except with~$\alpha_h$ in place of~$\alpha$.
Our convergence analysis relies on the following regularity estimates, in
which $C$ depends only on~$\widehat\Omega$.

\begin{assumption}\label{assumption}
The weak solution~$\hat\vu$ from~\eqref{eq: u hat solution} satisfies
\[
\|\hat\vu\|_{\hat\vV}+\alpha\|\nabla\cdot\hat\vu\|_{L^2(\widehat\Omega)}
    \le C\|\hat\ell\|_{\hat\vV^*}
\quad\text{and}\quad
\|\hat\vu\|_{\hat\vH}\le C\|\hat\ell\|_{L^2(\widehat\Omega)}.
\]
\end{assumption}

This assumption is known to hold when $d=2$~and $\Omega$ is a convex polygon, thanks to
results by Brenner and Sung~\cite[Lemma~2.2]{BrennerSung1992} and by Bacuta and
Bramble~\cite[Equation~(3.2)]{BacutaBramble2003}.

\begin{theorem}\label{thm: vu bound}
If Assumption~\ref{assumption} is satisfied, then the weak solution of~\eqref{para weak}
satisfies
\begin{equation}\label{a priori}
\|\vu\|_{\vV}+\alpha L^{1-d/2}\|\nabla\cdot\vu\|\le C L^{2-d/2}\|\ell\|\qquad{\rm and}\qquad 
\|\vu\|_{\vH}\le CL^{2-d/2}\|\ell\|.
\end{equation}
\end{theorem}

\begin{proof}
Using Assumption~\ref{assumption} and \eqref{eq: norm ell}, we find that
\[\|\vu\|_{\vV}+\alpha L^{1-d/2} \|\nabla\cdot\vu\| =\|\hat\vu\|_{\hat\vV}+\alpha \|\nabla\cdot\hat\vu\|_{L^2(\widehat\Omega)}
\le C\|\hat\ell\|_{\hat\vV^*}
\le CL^{2-d/2}\|\ell\|.\] Furthermore, $\|\vu\|_{\vH}=\|\hat\vu\|_{\hat\vH} 
\le C\|\hat\ell\|_{L^2(\widehat\Omega)}=CL^{2-d/2}\|\ell\|$.
\end{proof}

\section{Convergence analysis}\label{ConvAna}
For the error analysis, we use a projection
operator $\pi_h:\vVhat\to\vVhat_h$ having the approximation property~\cite{CrouzeixRaviart1973}
\[
\|\hat\vv- \pi_h\hat\vv\|_{\mathbf{L}^2(\widehat\Omega)}
  +\hat h\|\nabla(\hat\vv- \pi_h\hat\vv)
  \|_{\mathbf{L}^2(\widehat\Omega)}
  \le C\hat h^2\|\hat\vv\|_{\vHhat},
\]
together with a corresponding projection $\Pi_h:\vV\to\vV_h$ satisfying
$\Pi_h\vv(\bsx)= \pi_h\hat \vv(\hat\bsx)$.  Using the scaling properties
described in Section~\ref{Sec: FEM} we arrive at
\begin{equation} \label{projection estimate}
\|\vv-\Pi_h\vv\|+h\|\nabla(\vv-\Pi_h\vv)\|
  \le CL^{d/2-2}h^2\|\vv\|_{\vH},
\end{equation}
with the same constant $C$.
The regularity results in Theorem~\ref{thm: vu bound} imply that
\begin{equation}\label{projection estimate Bh}
\begin{aligned}
\|\vu-\Pi_h \vu\|_{\calB_h}^2&=2\|\veps(\vu-\Pi_h \vu)\|^2
   +\alpha_h\|\nabla \cdot (\vu-\Pi_h \vu)\|^2\\
  &\le C(1+\alpha_h)(L^{d/2-2}h)^2\|\vu\|_{\vH}^2
   \le C \alpha_h\,h^2\|\ell\|^2,
\end{aligned}
\end{equation}
where $\|\vv\|_{\calB_h}=\sqrt{\calB_h(\vv,\vv)}$ is the norm associated with the 
operator $\calB_h$,  and we have implicitly assumed that $\alpha_h\ge c>0$ for some positive constant $c$.

The appropriate choice for~$\alpha_h$ will follow from the following preliminary error estimate.

\begin{lemma}\label{lem: balance terms}
$\|\vu_h-\Pi_h \vu\|_{\calB_h}^2\le C\bigl(\alpha_h\,h^2
  +(\alpha-\alpha_h)^2\alpha_h^{-1}\alpha^{-2}L^2\bigr)\|\ell\|^2$.
\end{lemma}
\begin{proof}
From the weak formulation in~\eqref{para weak} and the numerical scheme
in~\eqref{FEM new}, and using the identity~\eqref{eq: B-B_h} with $\vw=\vu $ and
$\vv=\vv_h$, we have the property
\begin{align}\label{equ:Gal ort}
\calB_h(\vu_h-\vu, \vv_h) &
  =\ell(\vv_h)-\calB_h(\vu,\vv_h)
  =\calB(\vu,\vv_h)-\calB_h(\vu,\vv_h)  \nonumber \\
  &=\int_\Omega(\alpha-\alpha_h)\nabla\cdot\vu\,\nabla\cdot\vv_h\,\ud\bsx
\quad\text{for all $\vv_h \in \vV_h$.}
\end{align}
Since $\|\vu_h-\Pi_h \vu\|_{\calB_h}^2=\calB_h(\vu-\Pi_h\vu,\vu_h-\Pi_h\vu)
+\calB_h(\vu_h-\vu,\vu_h-\Pi_h\vu)$, by applying the Cauchy--Schwarz inequality
and taking $\vv_h=\vu_h-\Pi_h\vu$ in~\eqref{equ:Gal ort}, we obtain
\begin{multline*}
\|\vu_h-\Pi_h \vu\|_{\calB_h}^2=\calB_h(\vu-\Pi_h\vu,\vu_h-\Pi_h\vu)
  +\int_\Omega(\alpha-\alpha_h)\,\nabla\cdot\vu\,\nabla\cdot
  (\vu_h-\Pi_h\vu)\,\ud\bsx\\
  \le\|\vu-\Pi_h\vu\|_{\calB_h}^2+\tfrac14\|\vu_h-\Pi_h \vu\|_{\calB_h}^2
  +(\alpha-\alpha_h)^2\alpha_h^{-1}\|\nabla\cdot\vu\|^2
  +\tfrac{1}{4} \|\alpha_h^{1/2}\,\nabla\cdot(\vu_h-\Pi_h\vu)\|^2.
\end{multline*}
Using $ \|\alpha_h^{1/2}\,\nabla\cdot(\vu_h-\Pi_h\vu)\|^2
\le\|\vu_h-\Pi_h\vu\|_{\calB_h}^2$ and then cancelling the similar terms gives
\[
\|\vu_h-\Pi_h\vu\|_{\calB_h}^2\le2\|\vu-\Pi_h\vu\|_{\calB_h}^2
  +2(\alpha-\alpha_h)^2\alpha_h^{-1}\|\nabla\cdot\vu\|^2.
\]
To complete the proof, combine \eqref{projection estimate Bh} with the regularity
results in Theorem~\ref{thm: vu bound}.
\end{proof}

We now choose  $\alpha_h$ so that 
$\alpha_hh^2=(\alpha-\alpha_h)^2\alpha_h^{-1}\alpha^{-2}L^2$.
Rearranging, $(h/L)^2=(\alpha_h^{-1}-\alpha^{-1})^2$, which leads to
\begin{equation}\label{eq: alpha_h}
\alpha_h=\frac{\alpha}{1+\alpha h/L}<\alpha
\quad\text{and hence}\quad
\lambda_h=\alpha_h\mu=\frac{\lambda\mu}{\mu+\lambda h/L}<\lambda.
\end{equation}

\begin{theorem}\label{Convergence theorem}
Let $\vu $ and $\vu_h$ be the solutions of problems
\eqref{eq:L1}~and \eqref{FEM new}, and define $\alpha_h$ by~\eqref{eq: alpha_h}.
If $\vf\in\mathbf{L}^2(\Omega)$ and Assumption~\ref{assumption} is satisfied, then
\begin{equation}\label{h1 error}
\|\vu_h-\vu\|_{\calB_h}\le  C\alpha_h^{1/2} h\|\ell\|.
\end{equation}
Moreover, if $\calL:{\bf L}^2(\Omega) \to \R$ is a bounded  linear functional,
so that $|\calL(\vw)|\le \|\calL\|\,\|\vw\|$, then
\begin{equation}\label{convergence calL}
|\calL(\vu_h)-\calL(\vu)|\le  C L h\|\ell\|\|\calL\|.
\end{equation}
\end{theorem}
\begin{proof} 
Our choice of $\alpha_h$ means that
$(\alpha-\alpha_h)^2\alpha_h^{-1}\alpha^{-2}L^2=\alpha_h h^2$, and therefore by
Lemma~\ref{lem: balance terms},
\begin{equation}\label{eq: uh Pih u}
\|\vu_h-\Pi_h \vu\|_{\calB_h}^2   \le  C\alpha_h h^2\|\ell\|^2.
\end{equation}
This inequality, together with  \eqref{projection estimate Bh}, shows \eqref{h1 error}. To prove the second estimate~\eqref{convergence calL}, we replace 
$\ell$ with $\calL$ in \eqref{para weak}, and then, by
Theorem~\ref{thm: unique solution}, there exists a unique $\vu_{\calL} \in \vV$ satisfying
\begin{equation}\label{weak form uL}
\calB(\vu_{\calL}, \vv) = \calL(\vv)\quad\text{for all $\vv \in \vV$.}
\end{equation}
Furthermore, using  Theorem \ref{thm: vu bound} (with $\vu_{\calL}$ in place of $\vu$), we deduce that  
\begin{equation}\label{regularity of u_L}
\|\vu_{\calL}\|_{\vV}+\alpha L^{1-d/2}\|\nabla\cdot \vu_{\calL}\|\le CL^{2-d/2}\|\calL\|\qquad{\rm and}\qquad 
\|\vu_{\calL}\|_{\bf H}\le CL^{2-d/2}\|\calL\|.
\end{equation}
The modified conforming finite element approximation to $\vu_{\calL}$ is the
function $\mathbf{\Theta}_h\in\vV_h$ satisfying
\begin{equation}\label{weak form uL discrete}
\calB_h(\mathbf{\Theta}_h,\vv_h) = \calL(\vv_h)
\quad\text{for all $\vv_h \in\vV_h$.}
\end{equation}
By repeating the arguments that led to~\eqref{h1 error}, and using
\eqref{regularity of u_L}, we obtain
\begin{equation}\label{projection estimate ucalL}
\|\mathbf{\Theta}_h-\vu_{\calL}\|_{\calB_h}\le C\alpha_h^{1/2}\, h \|\calL\|.
\end{equation}

By \eqref{FEM new} and \eqref{weak form uL discrete}, we have
$\calL(\vu_h)=\calB_h(\mathbf{\Theta}_h,\vu_h)
=\calB_h(\vu_h,\mathbf{\Theta}_h)=\ell(\mathbf{\Theta}_h)
=\calB(\vu,\mathbf{\Theta}_h)$, whereas \eqref{weak form uL}~and
\eqref{para weak} imply that
$\calL(\vu)=\calB(\vu_{\calL},\vu)=\calB(\vu,\vu_{\calL})$, so
\[
\calL(\vu_h)-\calL(\vu)=\calB(\vu,\mathbf{\Theta}_h-\vu_{\calL}).
\]
Thus, taking $\vw=\vu$ and $\vv=\mathbf{\Theta}_h-\vu_{\calL}$ in the
identity~\eqref{eq: B-B_h},
\begin{equation}\label{eq: functional difference}
\calL(\vu_h)-\calL(\vu)
  =\calB_h(\vu,\mathbf{\Theta}_h-\vu_{\calL})
  +\int_{\Omega}(\alpha-\alpha_h)(\nabla\cdot\vu)
    (\nabla \cdot(\mathbf{\Theta}_h-\vu_{\calL}))\,\ud\bsx.
\end{equation}
By \eqref{weak form uL}~and \eqref{weak form uL discrete} it follows that
\begin{align*}
\calB_h(\mathbf{\Theta}_h-\vu_{\calL},\vv_h)
  &=\calB_h(\mathbf{\Theta}_h,\vv_h)-\calB_h(\vu_{\calL},\vv_h)
  =\calL(\vv_h)-\calB_h(\vu_{\calL},\vv_h)
  =\calB(\vu_{\calL},\vv_h)-\calB_h(\vu_{\calL},\vv_h)\\
 &=\int_{\Omega}(\alpha-\alpha_h)
  (\nabla\cdot\vu_{\calL})(\nabla\cdot\vv_h)\,\ud\bsx
  \quad\text{for all $\vv_h \in\vV_h$.}
\end{align*}
In particular, choosing $\vv_h=\vu_h$, we see that
\begin{align*}
\calB_h(\vu,\mathbf{\Theta}_h-\vu_{\calL})
  &=\calB_h(\mathbf{\Theta}_h-\vu_{\calL},\vu-\vu_h)
  +\calB_h(\mathbf{\Theta}_h-\vu_{\calL},\vu_h)\\
  &=\calB_h(\vu-\vu_h,\mathbf{\Theta}_h-\vu_{\calL})
  +\int_{\Omega} (\alpha-\alpha_h)
  (\nabla\cdot\vu_{\calL})(\nabla\cdot\vu_h)\,\ud\bsx
\end{align*}
and hence by~\eqref{eq: functional difference},
\begin{align*}
\calL(\vu_h)-\calL(\vu)
  &=\calB_h(\vu-\vu_h,\mathbf{\Theta}_h-\vu_{\calL})
+\int_{\Omega}(\alpha-\alpha_h)
  (\nabla\cdot\vu)\nabla\cdot(\mathbf{\Theta}_h-\vu_{\calL})\,\ud\bsx\\
&\qquad{}+\int_{\Omega}(\alpha-\alpha_h)(\nabla\cdot\vu_{\calL})\bigl[
  \nabla\cdot(\vu_h-\vu)+\nabla\cdot\vu\bigr]\,\ud\bsx.
\end{align*}
Theorem~\ref{thm: vu bound}~and \eqref{regularity of u_L} imply that
$\alpha\|\nabla\cdot\vu\|\le CL\|\ell\|$ and
$\alpha\|\nabla\cdot\vu_{\calL}\|\le CL\|\calL\|$. Thus, from the achieved
estimates in \eqref{h1 error}~and \eqref{projection estimate ucalL}, and
using $\alpha_h^{1/2}\|\nabla\cdot\vv\|\le\|\vv\|_{\calB_h}$,
 \begin{align*}
|\calL(\vu)-\calL(\vu_h)|
  &\le\|\vu-\vu_h\|_{\calB_h}\|\mathbf{\Theta}_h-\vu_{\calL}\|_{\calB_h}
  +(\alpha-\alpha_h) \|\nabla\cdot\vu\|
    \|\nabla\cdot(\mathbf{\Theta}_h-\vu_{\calL})\|\\
  &\qquad{}+(\alpha-\alpha_h)\|\nabla\cdot\vu_{\calL}\|
  \bigl[\|\nabla\cdot(\vu_h-\vu)\|+\|\nabla \cdot\vu\|\bigr]\\
  &\le C\alpha_h h^2\|\ell\|\,\|\calL\|
   +C(\alpha-\alpha_h)(L\alpha^{-1}\|\ell\|)
    \bigl(h\|\calL\|\bigr)\\
  &\qquad{}+C(\alpha-\alpha_h)(L\alpha^{-1}\|\calL\|)\bigl[
  h\|\ell\|+\alpha^{-1}L\|\ell\|\bigr]\\
  &\le C\Big(\alpha_h h^2+(\alpha-\alpha_h)L\alpha^{-1}[
h+\alpha^{-1}L]\Big)\|\calL\|\|\ell\|.
\end{align*}
Using $(\alpha-\alpha_h)L\alpha^{-1}=\alpha Lh/(L+\alpha h)$, $h+\alpha^{-1}L=(\alpha h+ L)/\alpha$ and $\alpha_h h\le L$, it follows that
\[
|\calL(\vu)-\calL(\vu_h)|
  \le C(\alpha_h h^2 +Lh)\|\calL\|\|\ell\|\le CLh\|\calL\|\|\ell\|,
\]
which completes the proof of \eqref{convergence calL}.
\end{proof}

\begin{corollary}\label{cor: L2 H1 errors}
For $\vu_h$ and $\vu$ as above,
\[
\|\vu_h-\vu\|\le C\mu^{-1}Lh \|\vf\|
\quad\text{and}\quad\|\vu_h-\vu\|_{\vV}\le  C\lambda_h^{1/2} \mu^{-3/2} h \|\vf\|.
\]
\end{corollary}
\begin{proof}
By choosing 
$\calL(\vv)=\int_\Omega(\vu_h-\vu)\cdot \vv\,\ud\bsx$
in~\eqref{convergence calL}, we have
\[
\|\vu_h-\vu\|^2=\calL(\vu_h-\vu)=\calL(\vu_h)-\calL(\vu)\le
 C L h\|\ell\|\|\vu_h-\vu\|\le C\mu^{-1}L h\|\vf\|\|\vu_h-\vu\|,
\]
implying the first estimate. By \eqref{eq: coer B}~and \eqref{h1 error},
\[
\|\vu_h-\vu\|_{\vV}^2\le C\|\varepsilon(\vu_h-\vu)\|^2
\le C\|\vu_h-\vu\|_{\calB_h}^2\le  C \alpha_h h^2\|\ell\|^2\le  C \lambda_h \mu^{-3} h^2\|\vf\|^2,
\]
implying the second estimate.
\end{proof}

\begin{remark}\label{rem: compare}
If we compute $\vu_h$ using the unmodified bilinear form $\calB$, or in other words, if $\vu_h$ is the usual conforming finite element solution, then the arguments above simplify to yield the bounds
\[
\|\vu_h-\vu\|_{\calB}\le\|\vu-\Pi_h\vu\|_{\calB}\le C\alpha^{1/2}h\|\ell\|
=C\mu^{-3/2}\lambda^{1/2}h\|\vf\|
\]
with
\[
|\calL(\vu_h)-\calL(\vu)|=|\calB(\vu-\vu_h,\mathbf{\Theta}_h-\vu_{\calL})|
  \le C \alpha h^2\|\calL\|\|\ell\|= C \mu^{-2}\lambda h^2\|\calL\|\|\vf\|
\]
and
\[
\|\vu_h-\vu\|\le C \mu^{-2}\lambda h^2\|\vf\|.
\]
Since $C_1\mu^{-1}Lh<C_2\mu^{-2} \lambda h^2$ if and only if $\lambda>(C_1/C_2)\mu L/h$, the 
modification based on~\eqref{eq: alpha_h} should result in a smaller $L^2$-error once $\lambda$ is 
larger than a modest multiple of~$\mu L/h$.  Notice that $\lambda_h=\alpha_h\mu\to \mu L/h$ 
as~$\lambda\to\infty$.
\end{remark}
\begin{remark}
To discuss mixed boundary conditions for the $2D$ case, let $\boldsymbol{n}$ denote the outward 
unit normal vector with respect to $\Omega$. Recall that $\Omega$ is a convex polygonal domain, and let the 
closed line segment $\Gamma_i$ denote the $i$th edge of the boundary $\Gamma=\partial\Omega$, ordered 
counterclockwise.  Thus, $\Gamma=\bigcup_{i=1}^m \Gamma_i$ for some finite $m$. We assume  that 
$u={\bf 0}$ on $\Gamma_D=\bigcup_{i=1}^{m_0} \Gamma_i$ and that $\vsigma\boldsymbol{n}={\bf g}$ 
on $\Gamma_N=\bigcup_{m_0+1}^{m} \Gamma_i$.  In other words, homogeneous Dirichlet boundary conditions
apply on $\Gamma_D$, whereas traction boundary conditions apply on $\Gamma_N$. The measure of $\Gamma_D$
must be positive, and the Neumann data ${\bf g}\in{\bf L}^2(\Gamma_N)$.

For $\ell(\vv)=\int_\Omega(\mu^{-1}\vf\cdot\vv)\,\ud\bsx+\int_{\Gamma_N}({\bf g}\cdot\vv)\,\ud s$, the 
existence and uniqueness theorem (Theorem~\ref{thm: unique solution}) remains valid with 
$|\ell(\vv)|\le C(\mu^{-1}\|\vf\|_{\vV^*}+\|{\bf g}\|_{{\bf L}^2(\Gamma_N)})\|\vv\|_{\vV}$ and
$\vV=\{\,\vw\in [H^1(\Omega)]^2:\text{$\vw=0$ on $\Gamma_D$}\,\}$. The conforming finite element scheme \eqref{FEM new} is also well-defined if we put
\[\vV_h=\{\,\vw_h\in\vV:\text{$\vw_h|_K\in[P_1]^2$ for all $K\in\mathcal{T}_h$}\,\}.\]
The convergence results in Theorem \ref{Convergence theorem} hold true provided the regularity results 
in Theorem~\ref{thm: vu bound} remain valid. In these two theorems, we anticipate that 
 $\mu^{-1}\|\vf\|$ should be replaced with  
$\mu^{-1}\|\vf\|+\|{\bf g}\|_{[H^{1/2}(\Gamma_N)]^2}$.  For more details, see for example Arnold and 
Falk~\cite{ArnoldFalk1987}, Brenner and Sung~\cite[Lemma 2.3]{BrennerSung1992}, and Brenner and Scott~\cite[Page 319]{BrennerScott2008}.
\end{remark}

\begin{remark} 
Many natural materials appear macroscopically homogeneous but are actually heterogeneous at the microscopic level, consisting of various constituents or phases. 
In this remark, we discuss the case of inhomogenous elastic material that is nearly incompressible. We define the variable Lam\'e parameters  $\mu$  and $\lambda$  in terms of the Young’s modulus $E$ via 
\[\mu(\bsx)=\frac{E(\bsx)}{2(1+\nu)}~~{\rm and}~~ \lambda(\bsx)=  \Lambda \widehat \lambda(\bsx),\quad {\rm with}~~\Lambda =\frac{1}{1-2\nu}~~{\rm and}~~\widehat \lambda (\bsx)=\frac{E(\bsx)\nu}{1+\nu}.
\]
The constant factor $\Lambda$ is expected to be very large (and approaches  infinity for materials which are increasingly incompressible as the Poisson ratio $\nu$ approaches $1/2$).

For the well-posedness of the problem \eqref{eq:L1}, we assume that $0<E_{\min}\le E\le E_{\max}<\infty$  for some positive constants $E_{\min}$ and $E_{\max}$. The regularity result in Theorem \ref{thm: vu bound} remains valid in this case with $\alpha=\Lambda/\overline\mu$, where $\overline \mu$ is the continuous average of $\mu$ over $\Omega$ and   the constant $C$ depends on $\Omega$, $\mu$ and $\widehat \lambda$, but is independent of $\Lambda$; see  \cite[Theorem 3.2]{Dicketal2024}. We modify the conforming FEM in \eqref{FEM new} as 
\begin{equation*}
\int_\Omega\Big[2\frac{\mu}{\overline \mu}\,\veps(\vu_h):\veps(\vv_h)+\alpha_h
\widehat \lambda  (\nabla\cdot\vu_h)(\nabla\cdot\vv_h)\Big]\,\ud\bsx=\frac{1}{\overline\mu}\int_\Omega\vf \cdot \vv_h \,\ud\bsx,~~ {where}~~\alpha_h=\frac{\Lambda_h}{\overline \mu}=\frac{\alpha}{1+\alpha h/L}.
\end{equation*} 
The achieved convergence results can be extended to cover this case. 
\end{remark}

\section{Numerical experiments}\label{Sec: Numeric}
\begin{table}
\newcommand{\Error}{$\|\vu_h-\vu\|$}
\newcommand{\Rate}{\textrm{Rate}}
\renewcommand{\arraystretch}{1.2}
\caption{Errors and convergence rates for ${\bf u}$ in Example~1.}
\label{tab: L2 errors}
\centering
\texttt{
\begin{tabular}{rc|cc|ccc}
\\\hline
\multicolumn{2}{c|}{$\lambda=100$}&\multicolumn{2}{c|}{$\calB$}&
\multicolumn{3}{c}{$\calB_h$}\\
\hline
$N_h$&$h$&\Error&\Rate&\Error&\Rate&$\lambda_h$\\
\hline
 586& 0.239& 1.07e+00& 1.146& 3.34e-01& 1.296&  15.7 \\
  2466& 0.119& 3.50e-01& 1.612& 1.36e-01& 1.291&  27.1 \\
 10114& 0.060& 9.79e-02& 1.838& 5.46e-02& 1.321&  42.7 \\
 40962& 0.030& 2.56e-02& 1.933& 2.14e-02& 1.353&  59.8 \\
164866& 0.015& 6.52e-03& 1.975& 8.64e-03& 1.307&  74.9 \\
\hline\\\hline
\multicolumn{2}{c|}{$\lambda=1{,}000$}&\multicolumn{2}{c|}{$\calB$}&
\multicolumn{3}{c}{$\calB_h$}\\
\hline
$N_h$&$h$&\Error&\Rate&\Error&\Rate&$\lambda_h$\\
\hline
 586& 0.239& 2.96e+00& 0.283& 3.64e-01& 1.218&  18.3 \\
  2466& 0.119& 1.76e+00& 0.748& 1.64e-01& 1.146&  35.9 \\
 10114& 0.060& 7.02e-01& 1.326& 7.65e-02& 1.102&  69.3 \\
 40962& 0.030& 2.15e-01& 1.706& 3.56e-02& 1.105& 129.6 \\
164866& 0.015& 5.86e-02& 1.877& 1.60e-02& 1.150& 229.4 \\
\hline\\\hline
\multicolumn{2}{c|}{$\lambda=10{,}000$}&
\multicolumn{2}{c|}{$\calB$}&\multicolumn{3}{c}{$\calB_h$}\\
\hline
$N_h$&$h$&\Error&\Rate&\Error&\Rate&$\lambda_h$\\
\hline
 586& 0.239& 3.73e+00& 0.034& 3.67e-01& 1.209&  18.6 \\
  2466& 0.119& 3.42e+00& 0.127& 1.68e-01& 1.127&  37.1 \\
 10114& 0.060& 2.57e+00& 0.409& 8.04e-02& 1.065&  73.9 \\
 40962& 0.030& 1.33e+00& 0.949& 3.92e-02& 1.034& 146.7 \\
164866& 0.015& 4.77e-01& 1.484& 1.93e-02& 1.027& 289.1 \\
\hline\\\hline
\multicolumn{2}{c|}{$\lambda=100{,}000$}&
\multicolumn{2}{c|}{$\calB$}&\multicolumn{3}{c}{$\calB_h$}\\
\hline
$N_h$&$h$&\Error&\Rate&\Error&\Rate&$\lambda_h$\\
\hline
 586& 0.239& 3.84e+00& 0.003& 3.68e-01& 1.208&  18.6 \\
  2466& 0.119& 3.80e+00& 0.014& 1.68e-01& 1.125&  37.2 \\
 10114& 0.060& 3.66e+00& 0.054& 8.08e-02& 1.061&  74.4 \\
 40962& 0.030& 3.20e+00& 0.194& 3.97e-02& 1.026& 148.6 \\
164866& 0.015& 2.16e+00& 0.569& 1.97e-02& 1.011& 296.8 \\
\hline
\end{tabular}
}
\end{table}
\begin{table}
\newcommand{\Error}{$\|\nabla\vu_h-\nabla\vu\|$}
\newcommand{\Rate}{\textrm{Rate}}
\renewcommand{\arraystretch}{1.2}
\caption{Errors and convergence rates for $\nabla \vu$ in Example~1.}
\label{tab: H1 errors}
\centering
\texttt{
\begin{tabular}{rc|cc|ccc}
\\\hline
\multicolumn{2}{c|}{$\lambda=100$}&\multicolumn{2}{c|}{$\calB$}&
\multicolumn{3}{c}{$\calB_h$}\\
\hline
$N_h$&$h$&\Error&\Rate&\Error&\Rate&$\lambda_h$\\
\hline
 586& 0.239& 2.66e+00& 1.063& 1.26e+00& 1.020&  15.7 \\
  2466& 0.119& 1.02e+00& 1.387& 6.18e-01& 1.028&  27.1 \\
 10114& 0.060& 3.83e-01& 1.407& 3.00e-01& 1.044&  42.7 \\
 40962& 0.030& 1.57e-01& 1.291& 1.44e-01& 1.061&  59.8 \\
164866& 0.015& 7.01e-02& 1.161& 6.90e-02& 1.058&  74.9 \\
\hline\\\hline
\multicolumn{2}{c|}{$\lambda=1{,}000$}&\multicolumn{2}{c|}{$\calB$}&
\multicolumn{3}{c}{$\calB_h$}\\
\hline
$N_h$&$h$&\Error&\Rate&\Error&\Rate&$\lambda_h$\\
\hline
 586& 0.239& 6.83e+00& 0.281& 1.30e+00& 0.993&  18.3 \\
  2466& 0.119& 4.12e+00& 0.730& 6.61e-01& 0.980&  35.9 \\
 10114& 0.060& 1.73e+00& 1.253& 3.37e-01& 0.972&  69.3 \\
 40962& 0.030& 5.96e-01& 1.536& 1.70e-01& 0.982& 129.6 \\
164866& 0.015& 1.96e-01& 1.605& 8.43e-02& 1.015& 229.4 \\
\hline\\\hline
\multicolumn{2}{c|}{$\lambda=10{,}000$}&
\multicolumn{2}{c|}{$\calB$}&
\multicolumn{3}{c}{$\calB_h$}\\
\hline
$N_h$&$h$&\Error&\Rate&\Error&\Rate&$\lambda_h$\\
\hline
 586& 0.239& 8.62e+00& 0.034& 1.31e+00& 0.989&  18.6 \\
  2466& 0.119& 7.89e+00& 0.127& 6.67e-01& 0.973&  37.1 \\
 10114& 0.060& 5.95e+00& 0.406& 3.44e-01& 0.956&  73.9 \\
 40962& 0.030& 3.14e+00& 0.921& 1.78e-01& 0.948& 146.7 \\
164866& 0.015& 1.20e+00& 1.389& 9.21e-02& 0.952& 289.1 \\
\hline\\\hline
\multicolumn{2}{c|}{$\lambda=100{,}000$}&
\multicolumn{2}{c|}{$\calB$}&
\multicolumn{3}{c}{$\calB_h$}\\
\hline
$N_h$&$h$&\Error&\Rate&\Error&\Rate&$\lambda_h$\\
\hline
 586& 0.239& 8.86e+00& 0.003& 1.31e+00& 0.989&  18.6 \\
  2466& 0.119& 8.77e+00& 0.014& 6.67e-01& 0.972&  37.2 \\
 10114& 0.060& 8.45e+00& 0.054& 3.44e-01& 0.954&  74.4 \\
 40962& 0.030& 7.39e+00& 0.194& 1.79e-01& 0.944& 148.6 \\
164866& 0.015& 5.01e+00& 0.561& 9.31e-02& 0.943& 296.8 \\
\hline
\end{tabular}
}
\end{table}

In this section, we illustrate the theoretical convergence results. Based on Corollary \ref{cor: L2 H1 errors}, we expect to observe $O(h)$ and $O(\lambda_h^{1/2} h)$ convergence rates in the ${\bf L}^2(\Omega)$-norm for ${\bf u}$ and $\nabla\mathbf{u}$, respectively, uniformly in $\lambda$, provided that all the imposed assumptions are  satisfied. The Julia code used in these experiments is available on GitHub~\cite{McLean2024}.

\paragraph{Example 1.} For $\Omega=(0,\pi)^2$, we chose the body force so
that the exact solution is
\[
\vu(\boldsymbol{x})=\begin{bmatrix}  u_1(x_1,x_2)\\  u_2(x_1,x_2)\end{bmatrix}
=\begin{bmatrix}
\bigl(\cos(2x_1)-1\bigr)\sin(2x_2)\\ \bigl(1-\cos(2x_2)\bigr)\sin(2x_1)
\end{bmatrix}
+\frac{\sin(x_1)\sin(x_2)}{\lambda}\begin{bmatrix}1\\ 1\end{bmatrix},
\quad\bsx=(x_1,x_2).
\]
Notice $\vu=\mathbf{0}$ on $\partial\Omega$.  We used a family of unstructured triangulations obtained by 
uniform refinement of an initial coarse mesh with $h=0.478$. Table \ref{tab: L2 errors} compares the 
errors and convergence rates in the ${\bf L^2}(\Omega)$-norm of the standard conforming, piecewise-linear 
Galerkin FEM (using the original bilinear form $\calB$) with our modified conforming 
scheme~\eqref{FEM new} (using $\calB_h$ depending on $\lambda_h=\alpha_h\mu$).  In all cases,
$\mu=1$. The number of unknowns in the linear system for each mesh is denoted by $N_h$. As~$\lambda$ 
increases from~$100$ to~$100{,}000$ the material becomes more nearly incompressible and, as expected, 
the standard method exhibits poor convergence. By contrast, our new method achieves $O(h)$ convergence 
uniformly in~$\lambda$.  The only case when use of the unmodified bilinear form~$\calB$ results in a 
(slightly) smaller error is if $\lambda=100$~and $h=0.015$; this is more-or-less as expected since 
$L/h=\sqrt{2}\pi/0.015\approx296\approx3\lambda$; see Remark~\ref{rem: compare}. Table~\ref{tab: H1 errors} compares the errors in~$\nabla{\bf u}$ for the two methods.  Once again, our 
new method is more accurate. In fact, it seems to achieve almost $O(h)$ convergence in this example.
\paragraph{Example 2.} (Cook’s membrane problem \cite{Cook1974}.) This is a benchmark problem for linear elasticity which combines bending and shearing.  As illustrated in Figure~\ref{fig: benchmark problem}, $\Omega$ is the convex polygon formed by connecting the vertices $(0,0)$, $(48,44)$, $(48,60)$, and $(0,44)$. The left-hand side of $\Omega$ is clamped (that is,
$\vu=\mathbf{0}$), and a constant shear load in the vertical direction is applied on the right-hand side, indicated by the red arrow: $\vsigma \vn =(0,g)$ where $g$ is a positive constant.  The remaining  part of the boundary is
traction-free, that is, $\vsigma \vn=\mathbf{0}$, and the body force $\vf=\mathbf{0}$.  The elastic material has Young's
modulus $E=1.12499998125$ MPa and Poisson ratio $\nu=0.499999975$, so  
\[
\lambda=\frac{E\nu}{(1+\nu)(1-2\nu)}=7.5\times10^6\quad\text{and}\quad
\mu= \frac{E}{2(1+\nu)}=0.375.
\]
The shear load gives rise to the functional $\ell(\vv) = \int_{44}^{60}g v_2\,\ud x_2$, 
where $\vv=(v_1,v_2)^T$, $\bsx=(x_1,x_2)$ and $g=1/16$.  Starting from a common initial 
mesh with $h=2.579$, we solved for~$\vu_h$ using both the standard bilinear form~$\calB$ 
(using $\lambda$) and the modified form $\calB_h$ (using $\lambda_h=11.173$). Each node 
was then displaced by the respective solution~$\vu_h$ to obtain the deformed meshes shown 
in Figure~\ref{fig: deformed mesh}.  The locking effect is apparent. We also compared the 
computed values for the vertical displacement at the midpoint~$A=(48,50)$ of the 
right-hand edge with the benchmark value $u_2(A)=16.442$~\cite[p.~3491]{LiuWang2022}.  
Figure~\ref{fig: benchmark} shows that using $\calB$ gives a much less accurate result 
than using $\calB_h$.
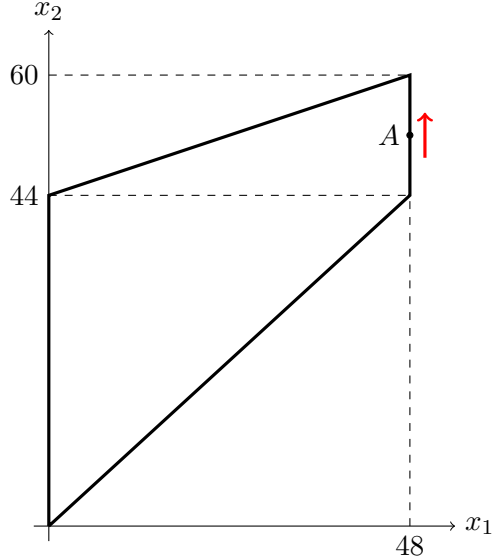
\begin{figure}
\begin{center}
\begin{tikzpicture}[scale=0.07]
\draw[-,very thick] (0,0) -- (48,44) -- (48,60) -- (0,44) -- (0,0);
\node[left] at (0,44) {$44$};
\node[left] at (0,60) {$60$};
\node[below] at (48,0) {$48$};
\draw[->] (-2,0) -- (54,0);
\node[right] at (54,0) {$x_1$};
\draw[->] (0,-2) -- (0,66);
\node[above] at (0,66) {$x_2$};
\draw[dashed] (0,44) -- (48,44);
\draw[dashed] (0,60) -- (48,60);
\draw[dashed] (48,0) -- (48,44);
\node[left] at (48,52) {$A$};
\draw[fill](48,52) circle (0.4);
\draw[->,very thick,red] (50,49) -- (50,55);
\end{tikzpicture}
\end{center}
\caption{The domain $\Omega$ for Cook's membrane problem.}\label{fig: benchmark problem}
\end{figure}

\begin{figure}[!htb]
\centering
\includegraphics[scale=0.5]{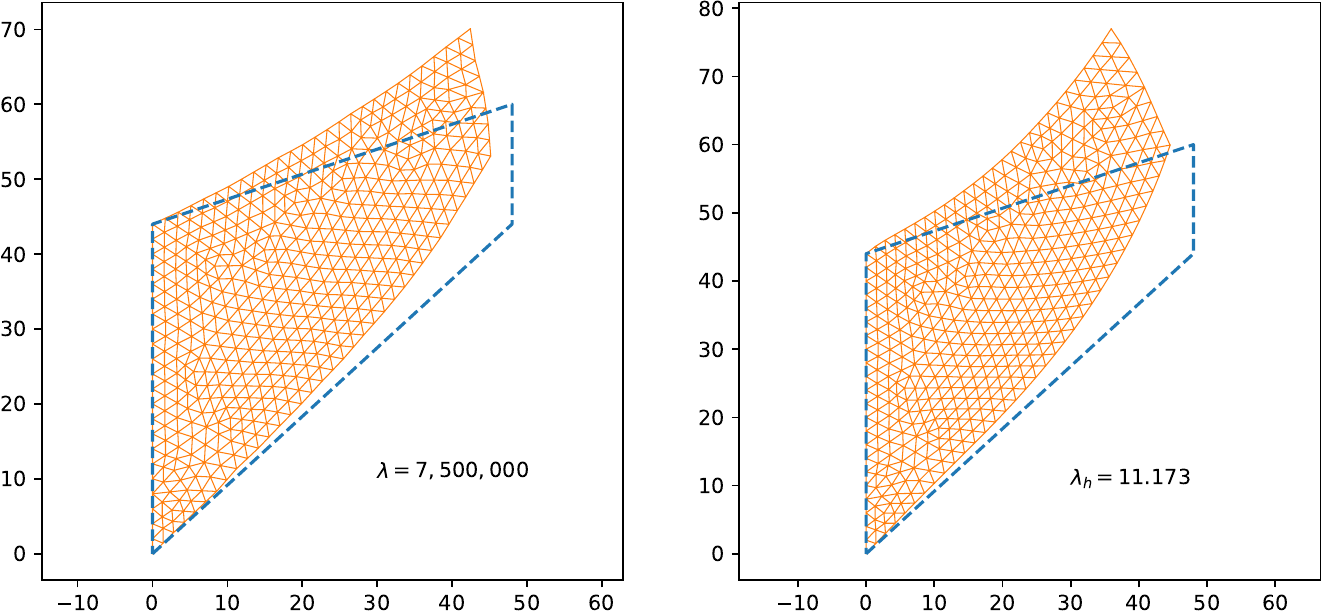}
\caption{The deformed mesh obtained when each node is displaced by $\vu_h$.
Left: using the standard conforming method.  Right: using the modified
method~\eqref{FEM new}.}\label{fig: deformed mesh}
\end{figure}
\begin{figure}[!htb]
\centering
\includegraphics[scale=0.5]{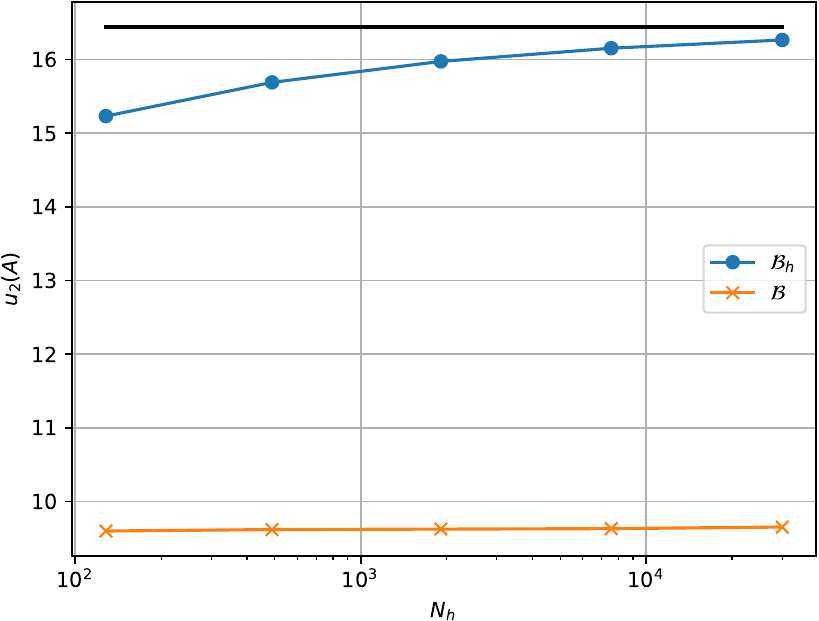}
\caption{Convergence of the computed value of the vertical displacement $u_2(A)$ for each choice of the bilinear form.}
\label{fig: benchmark}
\end{figure}

\end{document}